\newcommand\NoBlackBoxes{\global\overfullrule0pt}
\newcommand{\N}{\mathbb{N}}
\let\serieslogo@\relax
\let\@setcopyright\relax
\newtheorem{definition}{Definition}[section]
\newtheorem{theorem}[definition]{Theorem}
\newtheorem{rem}[definition]{Remark}
\renewcommand{\P}{{\mathbb{P}}}
\newcommand{\E}{{\mathbb{E}}}
\newcommand{\R}{{\mathbb{R}}}
\newcommand{\s}{\sigma}
\newcommand{\sgn}{\mathrm{sgn}}
\renewcommand{\S}{\mathcal{S}}
\renewcommand{\epsilon}{\varepsilon}
\renewcommand{\phi}{\varphi}
\renewcommand{\a}{\alpha}
\renewcommand{\s}{\sigma}
\renewcommand{\r}{\varrho}
\newcommand{\wt}{\widetilde}
\newcommand{\wht}{\widehat{T}}
\begin{document}

\title[Associative Memory with huge Capacity]{On a model of associative memory with huge storage capacity}

\author[M. Demircigil]{Mete Demircigil}
\address[Mete Demircigil]{D\'epartement de Math\'ematiques et Applications,
Ecole Normale Sup\'erieure,
45 rue d'Ulm,
75005 Paris,
France}

\email[Mete Demircigil]{mete.demircigil@ens.fr}

\author[J. Heusel]{Judith Heusel}
\address[Judith Heusel]{Fachbereich Mathematik und Informatik,
University of M\"unster,
Einsteinstra\ss e 62,
48149 M\"unster,
Germany}

\email[Judith Heusel]{jheus01@uni-muenster.de}

\author[M. L\"owe]{Matthias L\"owe}
\address[Matthias L\"owe]{Fachbereich Mathematik und Informatik,
University of M\"unster,
Einsteinstra\ss e 62,
48149 M\"unster,
Germany}

\email[Matthias L\"owe]{maloewe@math.uni-muenster.de}

\author[S. Upgang]{Sven Upgang}
\address[Sven Upgang]{Fachbereich Mathematik und Informatik,
University of M\"unster,
Einsteinstra\ss e 62,
48149 M\"unster,
Germany}

\email[Sven Upgang]{s.upgang@uni-muenster.de}

\author[F. Vermet]{Franck Vermet}
\address[Franck Vermet]{Laboratoire de Math\'ematiques de Bretagne Atlantique, UMR CNRS 6205, Universit\'e de Bretagne Occidentale,  6, avenue Victor Le Gorgeu\\
CS 93837\\
F-29238 BREST Cedex 3\\
France}

\email[Franck Vermet]{Franck.Vermet@univ-brest.fr}

\date{\today}

\subjclass[2000]{Primary: 82C32, 60K35, Secondary: 68T05, 92B20}

\keywords{Neural networks, associative memory, Hopfield model, exponential inequalities}

\begin{abstract}
In \cite{KrotovHopfield2016} Krotov and Hopfield suggest a generalized version of the well-known Hopfield model of associative memory. In their version they consider a polynomial interaction function and claim that this increases the storage capacity of the model. We prove this claim and take the ''limit'' as the degree of the polynomial becomes infinite, i.e. an exponential interaction function. With this interaction we prove that model has an exponential storage capacity in the number of neurons, yet the basins of attraction are almost as large as in the standard Hopfield model.

\end{abstract}


\maketitle

\section{Introduction}
Neural networks and associative memories have been a highly active research area in computer science, physics and probability theory for more than thirty years. The standard model of an associative memory was developed in the seminal paper \cite{Hopfield1982}. His model is based on $N$ neurons, each of which can only take the values $\pm 1$. Each pair of these neurons is connected and thus interacts. We want to store a set of input data $(\xi^\mu)_{\mu=1}^M$, so called patterns or images, where $M$ may and will depend on the system size $N$, in the model. Each of the patterns $\xi^\mu$ is itself a bit string of length $N$, hence $\xi^\mu=(\xi_i^\mu)_{i=1}^N$ where $\xi_i^\mu \in\{-1, +1\}$ for all $i$ and $\mu$.
The strength at which two neurons $i$ and $j$ interact depends on the images and is given by the so-called synaptic efficacy
$$
J_{ij}= \sum_{\mu=1}^M \xi_i^\mu \xi_j^\mu.
$$
With this set of $(J_{ij})$'s we associate a dynamics or updating rule $T=(T_i)_{i=1}^N$ on $\{-1,+1\}^N$ such that
$$
T_i(\sigma):= \sgn(\sum_{j=1}^N J_{ij} \sigma_j) \qquad \s=(\s_i) \in\{-1,+1\}^N
$$
and the indices $i$ are either updated uniformly at random or in a given order. One of the patterns $(\xi^\mu)$ is considered to be stored, if and only if it is stable under the (retrieval) dynamics $T$, i.e. if and only if $T_i(\xi^\mu)=\xi_i^\mu$ for all $i=1, \ldots, N$.

The central question is now: How many patterns can be stored in the above model? Of course, this sensitively depends on the way we choose these patterns.
Much in agreement with the choice of messages in information theory in most of the test scenarios for associative memories the patterns are chosen independent and identically distributed (even though, other choices may be considered as well, see e.g. \cite{Lo98}, \cite{Lo99a}, \cite{LV05} or \cite{LV07}). More precisely, it is assumed that
$$\P(\xi_i^\mu=1)= \P(\xi_i^\mu=-1)=\frac 12 \quad \mbox{for all $i$ and $\mu$}$$
and that all $(\xi_i^\mu)_{i,\mu}$ are i.i.d.
Under these assumptions it was shown in \cite{MPRV} that the Hopfield model described above can store $M= C \frac N {\log N}$ patterns with $C<\frac 12$, if we require that a fixed (but arbitrary) pattern is stable, and $C <\frac 14$ if we ask for stability of all patterns simultaneously (also see \cite{Bov98} for a matching upper bound).
However, non-rigorous computations involving the so-called replica trick, suggest that we may even achieve a capacity of $M= \alpha N$ if we allow for a small fraction of errors in the retrieval and $\alpha < 0.138$ (see \cite{AGS}, \cite{AGS2}). This prediction was mathematically confirmed (yet with smaller values of $\alpha$) in \cite{Newman_hopfield},  \cite{loukianova}, and \cite{talagrand}.

In a very recent contribution Krotov and Hopfield  suggest a generalized version of the Hopfield model (see \cite{KrotovHopfield2016}). There, the authors replace the updating dynamics $T=(T_i)$ by a more general, still asynchronous one, namely:
%
%
\begin{align}\label{dyn_gen}
T_i(\s) :=
\sgn\Bigl[
\sum\limits_{\mu=1}^M
\bigl( F\bigl( 1 \cdot \xi^\mu_i + \sum\limits_{j\neq i} \xi^\mu_j \sigma_j\bigr)
- F\bigl( (-1) \cdot \xi^\mu_i + \sum\limits_{j \neq i} \xi^\mu_j \sigma_j\bigr) \bigr) \Bigr]
\end{align}
where $F: \R \to \R$ is some smooth function. The case of $F(x) = x^2$ reduces to the standard Hopfield model with the quadratic crosstalk-terms introduced above. Indeed, in this case the argument in the sign-function is given by the difference
\begin{eqnarray*}
&&\sum\limits_{\mu=1}^M F\bigl( 1 \cdot \xi^\mu_i + \sum\limits_{j\neq i} \xi^\mu_j \sigma_j\bigr)
- F\bigl( (-1) \cdot \xi^\mu_i + \sum\limits_{j \neq i} \xi^\mu_j \sigma_j\bigr)\\
& = &
\sum\limits_{\mu=1}^M 1+ 2\sum\limits_{j\neq i} \xi_i^\mu \xi^\mu_j \sigma_j+ (\sum\limits_{j\neq i} \xi^\mu_j \sigma_j)^2-
1+ 2\sum\limits_{j\neq i} \xi_i^\mu \xi^\mu_j \sigma_j-(\sum\limits_{j\neq i} \xi^\mu_j \sigma_j)^2\\
&=& 4\sum\limits_{\mu=1}^M\sum\limits_{j\neq i} \xi_i^\mu \xi^\mu_j \sigma_j
\end{eqnarray*}
and its sign is of course the same as that of $\sum\limits_{\mu=1}^M\sum\limits_{j\neq i} \xi_i^\mu \xi^\mu_j \sigma_j$, therefore the dynamics are equal.

The reason for this more general choice of the interaction function $F$ is the following insight: In the standard Hopfield model there is an energy associated with the dynamics $T$ (i.e. the energy of a configuration decreases by an application of $T$).  This energy of a spin configuration $\s$ is given by $H(\s)= -\frac 1N\sum_\mu \sum_{i,j} \xi_i^\mu \xi_j^\mu \s_i \s_j$ and it
decreases ''too slowly'' as the configuration $\s$  approaches pattern to allow for a superlinear storage capacity

The authors in \cite{KrotovHopfield2016} therefore in particular analyze what they call ''polynomial interaction'' (or, as they put it, energy) functions, i.e. $F(x)=x^n$. Krotov and Hopfield state the following assertion
\begin{theorem}\label{theoKH} [\cite{KrotovHopfield2016}, formulas (5) and (6)]
\begin{enumerate}
\item
In the generalized Hopfield model with interaction function $F(x)=x^n$ one can store up to $M=\alpha_n N^{n-1}$ patterns, if small errors in the retrieval are tolerated.
\item In the same model, one can store $M= \frac{N^{n-1}}{c_n \log N}$ patterns for $c_n > 2 \,(2n-3)!!$, if one wants a fixed pattern to be a fixed point of the dynamics $T$
introduced in \eqref{dyn_gen} with a probability converging to 1.
\end{enumerate}
\end{theorem}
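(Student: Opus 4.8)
The plan is to fix, without loss of generality, the pattern $\xi^1$ and a single site $i$, and to study the sign of the bracketed expression in \eqref{dyn_gen} at $\s=\xi^1$. Set $S_\mu:=\sum_{j\neq i}\xi^\mu_j\xi^1_j$, so that $S_1=N-1$ while, for $\mu\geq 2$, $S_\mu$ is a sum of $N-1$ independent Rademacher variables, independent of $\xi^\mu_i$. Then $\xi^1_i\,T_i(\xi^1)=\sgn\!\big(\sum_{\mu=1}^M \xi^1_i[F(\xi^\mu_i+S_\mu)-F(-\xi^\mu_i+S_\mu)]\big)$ with $F(x)=x^n$. The contribution of $\mu=1$ is the ``signal'' $N^n-(N-2)^n=2nN^{n-1}(1+O(1/N))$, which carries the correct sign. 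For $\mu\geq 2$, the elementary identity $(a+b)^n-(a-b)^n=2b\sum_{k\ \mathrm{odd}}\binom{n}{k}a^{n-k}$ (valid for $b=\pm1$) shows that the $\mu$-th contribution equals $\eta_\mu:=2\xi^1_i\xi^\mu_i\sum_{k\ \mathrm{odd}}\binom{n}{k}S_\mu^{n-k}$; the random variables $\eta_\mu$, $\mu\geq 2$, are independent, centred and symmetric (flipping $\xi^\mu_i$ flips $\eta_\mu$). Consequently $T_i(\xi^1)\neq\xi^1_i$ can occur only if $\sum_{\mu\geq 2}\eta_\mu$ beats the signal, and by symmetry $\P(T_i(\xi^1)\neq\xi^1_i)\leq \P\big(\sum_{\mu\geq 2}\eta_\mu> 2nN^{n-1}(1+o(1))\big)$.

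The next step is a sharp exponential bound on this tail. A direct moment computation gives $\E[S_\mu^{2n-2}]=(2n-3)!!\,N^{n-1}+O(N^{n-2})$, and since the leading monomial of $\eta_\mu$ is $2n\,\xi^1_i\xi^\mu_i S_\mu^{n-1}$ while the monomials with $k\geq 3$ contribute only lower powers of $N$, one gets $\E[\eta_\mu^2]=4n^2(2n-3)!!\,N^{n-1}(1+o(1))$, hence $V:=\sum_{\mu\geq 2}\E[\eta_\mu^2]=4n^2(2n-3)!!\,M N^{n-1}(1+o(1))$. On the event $\mathcal A:=\{\max_{2\leq\mu\leq M}|S_\mu|\leq C\sqrt{N\log N}\}$, whose complement has probability at most $2M N^{-C^2/2}\to 0$ for $C$ large (Gaussian tail for Rademacher sums plus a union bound), every $\eta_\mu$ is bounded by $C'(N\log N)^{(n-1)/2}$, so Bernstein's inequality applied to the variables $\eta_\mu$ restricted to $\{|S_\mu|\leq C\sqrt{N\log N}\}$ (and recentred) yields
\[
\P\Big(\sum_{\mu\geq 2}\eta_\mu> s\Big)\leq \P(\mathcal A^c)+\exp\!\Big(-\frac{s^2/2}{\,V+C''\,s\,(N\log N)^{(n-1)/2}}\Big).
\]
With $s=2nN^{n-1}$ and $M$ of order $N^{n-1}/\log N$ or larger, the second term in the denominator is negligible against $V$, so the bound reduces to $\exp\!\big(-\tfrac{N^{n-1}}{2(2n-3)!!\,M}(1+o(1))\big)$.

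Both assertions now drop out of this single-site estimate. For (2), taking $M=N^{n-1}/(c_n\log N)$ makes the failure probability at a given site at most $N^{-c_n/(2(2n-3)!!)+o(1)}$, and a union bound over the $N$ sites gives $\P(\exists\, i:\ T_i(\xi^1)\neq\xi^1_i)\leq N^{\,1-c_n/(2(2n-3)!!)+o(1)}\to 0$ exactly when $c_n>2(2n-3)!!$. For (1), taking $M=\alpha_n N^{n-1}$ makes the single-site failure probability at most $\exp\!\big(-\tfrac{1}{2(2n-3)!!\,\alpha_n}\big)(1+o(1))=:p(\alpha_n)$, which tends to $0$ as $\alpha_n\to 0$; thus for any prescribed tolerated error fraction one chooses $\alpha_n$ small enough that $p(\alpha_n)$ lies below it, so the expected number of unstable sites is at most $p(\alpha_n)N$, and one upgrades this to an in-probability statement by a second-moment argument, estimating $\operatorname{Var}\big(\#\{i:T_i(\xi^1)\neq\xi^1_i\}\big)$ via a covariance bound showing that distinct sites are asymptotically uncorrelated, so that the fraction of unstable sites concentrates around its (small) mean.

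The delicate point, and the main obstacle, is the exponential inequality of the second paragraph. The summands $\eta_\mu$ are polynomials of degree $n-1$ in a Rademacher sum and are therefore not sub-Gaussian but only sub-Weibull, so Hoeffding-type bounds do not apply directly and one must truncate the $S_\mu$ and then invoke Bernstein, choosing the truncation level $C\sqrt{N\log N}$ large enough that the discarded event is negligible yet small enough that the Gaussian term in Bernstein's bound dominates over the whole range of $M$ considered. Equally important is to verify that the lower-order monomials in $\eta_\mu$ and the $O(1/N)$ corrections to the signal $N^n-(N-2)^n$ and to $\E[\eta_\mu^2]$ do not perturb the leading constant: it is precisely this constant that produces the sharp threshold $c_n>2(2n-3)!!$ and, dually, quantifies how small $\alpha_n$ must be.
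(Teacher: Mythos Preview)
The paper does \emph{not} prove Theorem~\ref{theoKH}. It states the result (attributed to Krotov--Hopfield), remarks that a direct proof ``could probably be rather involved'' because the energy difference for $F(x)=x^n$ produces several monomials $\xi_i^\mu S_\mu^{\,n-k}$ with $k$ odd rather than a single one, and then passes to the closely related Theorem~\ref{theo nspin} for the simplified dynamics $\widehat{T}$, which keeps only the top monomial $\xi_i^\mu(\sum_j\xi_j^1\xi_j^\mu)^{n-1}$. So your proposal actually attempts more than the paper does: you attack the original dynamics $T$ directly.

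Methodologically, your truncation-plus-Bernstein argument and the paper's proof of Theorem~\ref{theo nspin} (part~2) are close cousins. The paper applies the exponential Chebyshev inequality, factors the moment generating function over $\mu$, conditions on $\xi_i^1\xi_i^2$ to obtain $\E[\cosh(t\,N^{(n-1)/2}m^{n-1})]$ with $m=N^{-1/2}\sum_j\xi_j^1\xi_j^2$, and then splits this expectation at $|m|\le(\log N)^\beta$ versus $|m|>(\log N)^\beta$, using $\cosh(z)\le e^{z^2/2}$ on the bulk and $\cosh(z)\le e^{|z|}$ together with a large-deviation bound on the tail, before optimising $t=1/(\kappa_{2(n-1)}M)$. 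This is Bernstein's inequality re-derived by hand, with essentially the same truncation level $\sqrt{N}\,(\log N)^\beta$ that you use. The genuinely new ingredient in your argument, absent from the paper, is the observation that the lower-order monomials $\binom{n}{k}S_\mu^{\,n-k}$, $k\ge 3$, contribute only $o(N^{n-1})$ to $\E[\eta_\mu^2]$ and hence leave the threshold $c_n>2(2n-3)!!$ unchanged; this is precisely the complication the paper side-steps by moving to $\widehat{T}$, and your variance computation disposes of it cleanly. Your argument looks correct; note incidentally that because the truncation event $\{|S_\mu|\le C\sqrt{N\log N}\}$ is independent of $\xi_i^\mu$, the truncated $\eta_\mu$ remain symmetric and no recentering is needed.

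For part~(1) the paper again offers no proof: it cites \cite{Newman_hopfield} for the corresponding statement in Theorem~\ref{theo nspin}. Newman's argument is an energy-barrier computation rather than a second-moment bound on the number of unstable sites, so your sketch via covariance decay is a different route; it is plausible but would need the pairwise-covariance estimate to be written out to be convincing.
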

A proof of this theorem could probably be rather involved. This is due to the fact that the energy function of
the model described in Theorem \ref{theoKH} is of a polynomial form.
As a matter of fact, up to normalization, the energy of the model in these cases is $H(\s)= \sum_\mu P(m^\mu)$, where $m^\mu:= \sum_i \xi_i^\mu \s_i$ is the overlap of the configuration $\s$ with the $\mu$'th pattern and $P$ is a polynomial, or, in other words, with $F(x)=x^n$ and $n$ even
$$
F\left( 1 \cdot \xi^\mu_i + \sum\limits_{j\neq i} \xi^\mu_j \sigma_j\right)
- F\left( (-1) \cdot \xi^\mu_i + \sum\limits_{j \neq i} \xi^\mu_j \sigma_j\right)
$$
consists of many summands of the form $\xi_i^\mu (\sum\limits_{j \neq i} \xi^\mu_j \sigma_j)^m$ where $m$ is an even number smaller than $n$.
There is, however, a closely related model, where the above statement can be proven.
To this end consider the dynamics $\widehat{T}=(\wht_i)$ on $\{-1,+1\}^N$ defined by
\begin{equation}\label{multdyn}
\wht_i(\sigma):= \mathrm{sgn}\left(\sum_{1=j_1, \ldots j_{n-1}}^N
\sigma_{j_1} \cdots \sigma_{j_{n-1}}  W_{i,j_1 \ldots
j_{{n-1}}}\right).
\end{equation}
with
\begin{equation}\label{multhebb}
W_{i_1, \ldots, i_n}= \frac 1 {N^{n-1}} \sum_{\mu=1}^M
\xi_{i_1}^\mu \xi_{i_2}^\mu \cdots \xi_{i_n}^\mu.
\end{equation}
Then the following theorem can be shown
\begin{theorem}\label{theo nspin}
\begin{enumerate}
\item
In the generalized Hopfield model with dynamics $\wht$ defined in \eqref{multdyn} and \eqref{multhebb} one can store up to $M=\alpha_n N^{n-1}$ patterns, if small errors in the retrieval are tolerated.
\item In the same model, one can store $M= \frac{N^{n-1}}{c_n \log N}$ patterns for $c_n > 2 \,(2n-3)!!$, if one wants a fixed pattern to be a fixed point of the dynamics $\wht$ with a probability converging to 1.
\end{enumerate}
\end{theorem}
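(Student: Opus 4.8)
The plan is first to rewrite the dynamics $\wht$ in terms of overlaps. Expanding the multilinear sum in \eqref{multdyn} with the weights \eqref{multhebb},
\[
\sum_{j_1,\dots,j_{n-1}=1}^{N}\sigma_{j_1}\cdots\sigma_{j_{n-1}}\,W_{i,j_1\dots j_{n-1}}
=\frac1{N^{n-1}}\sum_{\mu=1}^{M}\xi_i^\mu\Bigl(\sum_{j=1}^{N}\xi_j^\mu\sigma_j\Bigr)^{n-1},
\]
so $\wht_i(\sigma)=\sgn\bigl(\sum_{\mu=1}^{M}\xi_i^\mu(m^\mu(\sigma))^{n-1}\bigr)$, where $m^\mu(\sigma):=\sum_{j=1}^{N}\xi_j^\mu\sigma_j$ is the (unnormalised) overlap. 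Fix the pattern $\xi^1$; since $m^1(\xi^1)=N$, the identity $\wht_i(\xi^1)=\xi_i^1$ holds exactly when
\[
N^{n-1}+R_i>0,\qquad R_i:=\xi_i^1\sum_{\mu=2}^{M}\xi_i^\mu\bigl(m^\mu(\xi^1)\bigr)^{n-1}.
\]
Both parts of the theorem thus reduce to estimating $\P(R_i\le-N^{n-1})$ and then summing this bound over $i=1,\dots,N$ (for part (2)) or averaging it over $i$ (for part (1)).

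To make $R_i$ amenable to an exponential inequality I would first disentangle the double occurrence of $\xi_i^\mu$, which appears both outside and inside $m^\mu(\xi^1)=\xi_i^\mu\xi_i^1+r_i^\mu$ with $r_i^\mu:=\sum_{j\ne i}\xi_j^\mu\xi_j^1$. Conditioning on the $\sigma$-algebra $\mathcal G_i$ generated by all pattern entries except $(\xi_i^\mu)_{\mu\ge2}$, these remain i.i.d.\ Rademacher and independent of $\mathcal G_i$; expanding $(r_i^\mu+\xi_i^\mu\xi_i^1)^{n-1}$ by the binomial theorem, multiplying by $\xi_i^1\xi_i^\mu$, and using $(\xi_i^\mu)^2=1$ yields
\[
R_i=C_i+\sum_{\mu=2}^{M}\xi_i^1\xi_i^\mu\,c_{i,\mu},\qquad
C_i:=\sum_{\mu=2}^{M}\sum_{k\text{ odd}}\binom{n-1}{k}(r_i^\mu)^{n-1-k},\quad
c_{i,\mu}:=\sum_{k\text{ even}}\binom{n-1}{k}(r_i^\mu)^{n-1-k},
\]
with $C_i$ and the $c_{i,\mu}$ being $\mathcal G_i$-measurable. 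If $n$ is even every exponent appearing in $C_i$ is even, so $C_i\ge0$; if $n$ is odd, $C_i$ is a sum of odd powers of the $r_i^\mu$, hence centred with $\Var(C_i)=O(MN^{n-2})$, so $|C_i|\le\d N^{n-1}$ fails only on a $\mathcal G_i$-event of probability $O(M/N^{n})$, negligible even after summing over $i$. Conditionally on $\mathcal G_i$, $\sum_{\mu\ge2}\xi_i^1\xi_i^\mu c_{i,\mu}$ is a sum of independent symmetric random variables with ranges $2|c_{i,\mu}|$, so on $\{C_i\ge-\d N^{n-1}\}$ Hoeffding's inequality gives
\[
\P\bigl(R_i\le-N^{n-1}\mid\mathcal G_i\bigr)\ \le\ \exp\!\Bigl(-\frac{(1-\d)^2\,N^{2(n-1)}}{2\sum_{\mu=2}^{M}c_{i,\mu}^2}\Bigr).
\]

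It remains to insert the correct size of the conditional variance and conclude. Since $c_{i,\mu}$ equals $(r_i^\mu)^{n-1}$ plus terms of strictly lower degree, and $\E[(r_i^\mu)^{2(n-1)}]=(2n-3)!!\,N^{n-1}(1+o(1))$ for a $\pm1$ random walk of length $N-1$, the sum $\sum_{\mu=2}^{M}c_{i,\mu}^2=(1+o(1))\sum_{\mu=2}^{M}(r_i^\mu)^{2(n-1)}$ concentrates about $(M-1)(2n-3)!!\,N^{n-1}$; a third-moment (Rosenthal-type) bound makes the event $\{\sum_\mu c_{i,\mu}^2>(1+\e)(M-1)(2n-3)!!\,N^{n-1}\}$ negligible after multiplying its probability by $N$. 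Combining this with the preceding display,
\[
\P\bigl(\wht_i(\xi^1)\ne\xi_i^1\bigr)\ \le\ \exp\!\Bigl(-\frac{(1-\d)^2}{2(1+\e)(2n-3)!!}\cdot\frac{N^{n-1}}{M-1}\Bigr)+o(1/N),
\]
with $\d,\e>0$ arbitrarily small. For part (2), $M=N^{n-1}/(c_n\log N)$ turns the leading term into $N^{-(1-\d)^2c_n/(2(1+\e)(2n-3)!!)}$; since $c_n>2(2n-3)!!$ one can choose $\d,\e$ so small that this exponent exceeds $1$, whence $\P(\wht_i(\xi^1)\ne\xi_i^1)=o(1/N)$ and a union bound over $i=1,\dots,N$ shows that $\xi^1$ is a fixed point of $\wht$ with probability tending to $1$. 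For part (1), $M=\alpha_nN^{n-1}$ makes the leading term an $N$-independent constant that vanishes as $\alpha_n\to0$, so $\E\bigl[\tfrac1N\#\{i:\wht_i(\xi^1)\ne\xi_i^1\}\bigr]$ is, for large $N$, below any prescribed $\r>0$ once $\alpha_n$ is chosen small; a second-moment estimate (conditioning on the overlaps $(m^\mu(\xi^1))_\mu$ and using that the instability events of two distinct neurons are asymptotically uncorrelated) upgrades this to: with probability tending to $1$, $\wht(\xi^1)$ differs from $\xi^1$ in at most $\r N$ coordinates.

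I expect the main obstacle to be twofold. The first is the algebraic reduction $R_i=C_i+\sum_\mu\xi_i^1\xi_i^\mu c_{i,\mu}$ together with the verification that $C_i$ is harmless --- nonnegative when $n$ is even, of strictly lower order when $n$ is odd; this is precisely what converts $R_i$ into a sum of conditionally independent, symmetric terms and makes a clean exponential bound available. The second, more delicate, is extracting the \emph{sharp} constant: one must control the $2(n-1)$-th moment of a Rademacher random walk, and the fluctuations of $\sum_\mu c_{i,\mu}^2$ around its mean, tightly enough that the admissible range of the constant comes out as $c_n>2(2n-3)!!$ and not something larger.
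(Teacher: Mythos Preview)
Your argument is correct and reaches the same sharp threshold $c_n>2(2n-3)!!$, but the route differs from the paper's. The paper does not peel off the $i$-th coordinate via the binomial expansion; instead it applies an exponential Chebyshev inequality directly to $-\sum_{\mu\ge2}\xi_i^1\xi_i^\mu\bigl(\sum_j\xi_j^1\xi_j^\mu\bigr)^{n-1}$, uses the symmetry of $\xi_i^1\xi_i^\mu$ to pass to $\E\bigl[\cosh\bigl(t(\sum_j\xi_j^1\xi_j^\mu)^{n-1}\bigr)\bigr]^{M-1}$, and then bounds this MGF by a truncation: values of the normalised overlap $m=N^{-1/2}\sum_j\xi_j^1\xi_j^\mu$ with $|m|>\log(N)^\beta$ are handled by a crude large-deviations bound, while on $|m|\le\log(N)^\beta$ one uses $\cosh z\le e^{z^2/2}$, Taylor-expands, and invokes $\E[m^{2(n-1)}]\le\kappa_{2(n-1)}=(2n-3)!!$. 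Optimising over $t$ yields exactly the condition $c_n>2(2n-3)!!$. Your approach --- condition on $\mathcal G_i$, apply Hoeffding, then prove a law of large numbers for the conditional variance $\sum_\mu c_{i,\mu}^2$ --- is more modular and makes the origin of the constant $(2n-3)!!$ transparent as $\E[(r_i^\mu)^{2(n-1)}]\sim(2n-3)!!\,N^{n-1}$, at the cost of the extra algebraic decomposition $R_i=C_i+\sum_\mu\xi_i^1\xi_i^\mu c_{i,\mu}$ and the parity case analysis for $C_i$. The paper's approach avoids that decomposition entirely but needs the truncation trick to tame the MGF. For part~(1) the paper simply cites Newman's result, whereas you sketch the first/second-moment argument; this is consistent with Newman's method and is more than the paper actually proves. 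One small point worth tightening in your write-up: for $n=2$ a bare Chebyshev bound on $\sum_\mu c_{i,\mu}^2$ gives only $O(1/M)=O(\log N/N)$, which is not $o(1/N)$; you do need the higher-moment (e.g.\ fourth-moment) bound you allude to, so state it explicitly.
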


While part 1 of the above theorem was proved in \cite{Newman_hopfield}, we will give a proof of the second part (including a computation of the constants $c_n$) in Section 2 below. Note that the thermodynamics of this model was analyzed in \cite{BovierNiederhauser}.

More interesting than these polynomial models is, however, the question what happens if we formally send $n$ to infinity. One could conjecture from above that this would lead to an interaction function of the form $e^{x}$, on the one hand and to a super-polynomial storage capacity, on the other. This is indeed what we will show.
Actually, we will even prove slightly more: In general, one could imagine that an increase in capacity goes to expense of associativity, such that in the extreme case, one could store $2^N$ patterns but none of them has a positive radius of attraction. We will show that this is not the case for our model: The dynamics is even able to repair an amount of random errors of order $N$.

\begin{theorem}\label{theo_exp}
Consider the generalized Hopfield model with the dynamics described in \eqref{dyn_gen} and interaction function $F$ given by $F(x)= e^{x}$.
For a fixed $0 < \alpha < \log(2)/2$ let $M=\exp\left(\alpha N  \right)+1$ and let $\xi^1, \ldots, \xi^M$ be $M$ patterns chosen uniformly at random from $\{-1,+1\}^N$. Moreover fix $\r \in [0, 1/2)$. For any $\mu$ and any $\wt \xi^\mu $ taken uniformly at random from the Hamming sphere with radius $\r N$ centered in $\xi^\mu$, $\S(\xi^\mu,\r N)$, where $\r N$ is assumed to be an integer, it holds that
$$\P\left( \exists \mu \; \exists i  : T_i\left(\wt \xi^\mu \right) \neq \xi^\mu_i \right) \rightarrow 0,$$
if $\alpha$ is chosen in dependence of $\r$ such that

$$ \a < \frac{I(1-2\r)}{2}
$$

with $I:x \mapsto \frac{1}{2}\left((1+x) \log (1+x) + (1-x) \log(1-x)\right)$.
\end{theorem}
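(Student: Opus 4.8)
The plan is to reduce everything, via exchangeability of the patterns and of the coordinates together with a union bound, to showing $\P\bigl(T_1(\wt\xi^1)\neq\xi^1_1\bigr)=o\bigl(1/(NM)\bigr)$, and then to make the signal-versus-noise decomposition explicit for $F(x)=e^x$. Inserting $F(x)=e^x$ into \eqref{dyn_gen} and using $e^{\xi}-e^{-\xi}=\xi\,(e-e^{-1})$ for $\xi\in\{-1,1\}$, the $\mu$-th summand inside the sign in \eqref{dyn_gen} (taken at $i=1$, $\s=\wt\xi^1$) equals $(e-e^{-1})\,\xi^\mu_1\,e^{\sum_{j\neq1}\xi^\mu_j\wt\xi^1_j}$. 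Hence $T_1(\wt\xi^1)=\xi^1_1$ precisely when
$$
e^{S}+\sum_{\mu=2}^{M}Z_\mu>0,\qquad Z_\mu:=\xi^1_1\xi^\mu_1\,e^{\sum_{j\neq1}\xi^\mu_j\wt\xi^1_j},\qquad S:=\sum_{j\neq1}\xi^1_j\wt\xi^1_j,
$$
and since $S=N(1-2\r)-\xi^1_1\wt\xi^1_1\geq N(1-2\r)-1$, it suffices to show that the ``noise'' $\sum_{\mu\geq2}Z_\mu$ stays, with probability $1-o\bigl(1/(NM)\bigr)$, below the ``signal'' $a:=e^{N(1-2\r)-1}$ in absolute value.

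Next I would condition on $\xi^1$ and on the set of $\r N$ flipped coordinates (equivalently, on $\wt\xi^1$). Then $Z_2,\dots,Z_M$ become i.i.d.\ with $Z_\mu=\e_\mu\,e^{T_\mu}$, where $\e_\mu:=\xi^1_1\xi^\mu_1$ is a Rademacher variable independent of $T_\mu:=\sum_{j\neq1}\xi^\mu_j\wt\xi^1_j$, and $T_\mu$ is a sum of $N-1$ i.i.d.\ Rademacher variables; in particular each $Z_\mu$ is centred and symmetric. Here lies the main obstacle: this is a sum of $M=e^{\Theta(N)}$ i.i.d.\ terms with an extremely heavy tail, since a single $Z_\mu$ can reach $e^{N-1}$, which is doubly exponentially beyond the threshold $a$, so that Hoeffding's inequality and a naive Chernoff bound on $\sum_{\mu\geq2} Z_\mu$ are both worthless. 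The remedy is to truncate at the critical exponential scale: fix $\theta\in(0,1-2\r)$ close enough to $1-2\r$ that $I(\theta)>2\a$ (possible since $I$ is continuous, increasing and $I(1-2\r)>2\a$) and put $Y_\mu:=Z_\mu\,\ind_{\{T_\mu\leq\theta N\}}$.

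The remainder is handled by Cram\'er's bound $\P(T_\mu>\theta N)\leq e^{-(N-1)I(\theta)}$: the probability that some $T_\mu$ with $\mu\geq2$ exceeds $\theta N$ is at most $Me^{-(N-1)I(\theta)}$, and after the $NM$ from the union bound this is $\leq Ne^{(2\a-I(\theta))N+o(N)}\to0$. On the complement $\sum_{\mu\geq2}Z_\mu=\sum_{\mu\geq2}Y_\mu$, a sum of i.i.d.\ centred variables bounded by $b:=e^{\theta N}$ with $\E[Y_\mu^2]\leq\E\bigl[e^{2T_\mu}\ind_{\{T_\mu\leq\theta N\}}\bigr]\leq e^{N(2\theta-I(\theta))+O(1)}=:v$ (optimize $e^{2T}\ind_{\{T\leq\theta N\}}\leq e^{\l(\theta N-T)+2T}$ over $\l\geq0$; for the few small $\r$ with $1-2\r>\tanh2$ one takes $\theta>\tanh2$ and uses $v\leq\cosh(2)^{N-1}$, together with the elementary convexity bound $I(x)/2\leq2x-\log\cosh2$ for $x\geq\tanh2$, which keeps the estimate below valid). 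Bernstein's inequality then gives
$$
\P\left(\left|\sum_{\mu=2}^{M}Y_\mu\right|\geq a\right)\leq 2\exp\left(-\frac{a^2/2}{(M-1)v+ab/3}\right),
$$
and a comparison of exponents shows that for the chosen $\theta$ the term $ab/3$ dominates the denominator --- this amounts to $\a<1-2\r-\theta+I(\theta)$, which holds since $1-2\r-\theta+I(\theta)>I(\theta)>2\a$. The right-hand side is thus at most $2\exp\bigl(-\tfrac34\,a\,b^{-1}\bigr)=2\exp\bigl(-\tfrac{3}{4e}\,e^{(1-2\r-\theta)N}\bigr)$, doubly exponentially small and hence negligible against the prefactor $NM=Ne^{\a N}$. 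Adding the remainder estimate and this bound gives $\P(\exists\,\mu\,\exists\,i:T_i(\wt\xi^\mu)\neq\xi^\mu_i)\to0$. The constant $I(1-2\r)/2$ appears exactly as the threshold still permitting the truncation level $\theta$ to be pushed up to $1-2\r$ while keeping the remainder estimate alive, and quantifying this balance is the heart of the matter.
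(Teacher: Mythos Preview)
Your argument is correct and reaches the same threshold $\a<I(1-2\r)/2$, but the route is genuinely different from the paper's. The paper immediately discards the signs and bounds $|E_{\text{noise}}|$ by the positive sum $\sum_{\mu\ge2}e^{\langle\xi^\mu\mid\wt\xi^1\rangle}$; it then conditions on the random set $A=\{\mu:\langle\xi^\mu\mid\wt\xi^1\rangle\ge\beta N\}$, splits according to $|A|$, and combines a binomial large-deviation bound for $|A|$ with Cram\'er's bound for the maximal overlap inside $A$. You instead keep the Rademacher sign $\e_\mu=\xi^1_1\xi^\mu_1$ so that the noise is a genuine centred i.i.d.\ sum, truncate at a fixed level $\theta N$ with $I(\theta)>2\a$, control the remainder by a union bound plus Cram\'er, and finish the truncated sum with Bernstein's inequality. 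Your approach is shorter and more transparent about where the threshold comes from (the remainder term forces $2\a<I(\theta)\uparrow I(1-2\r)$), while the paper's conditioning argument is more hands-on and does not rely on the centering. Two small remarks: (i) the inequality $I(x)/2\le 2x-\log\cosh2$ for $x\ge\tanh2$ that you invoke in the parenthetical is true, but it is not literally a convexity (tangent-line) bound---convexity gives the opposite inequality $I(x)\ge 2x-\log\cosh2$; a one-line calculus check suffices; (ii) you can avoid that case distinction altogether by noting that the Bernstein exponent is at least $\min\bigl(\tfrac{a^2}{4(M-1)v},\tfrac{3a}{4b}\bigr)$, and with $v=e^{N(2\theta-I(\theta))+O(1)}$ and $I(\theta)>2\a$, $\theta<1-2\r$ one has $2(1-2\r)-\a-2\theta+I(\theta)>\a>0$, so both terms are of order $e^{cN}$ regardless of which dominates.
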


\begin{rem}
\normalfont
Note that Theorem \ref{theo_exp} in particular implies that
\begin{align*}
\P\left( \exists \mu \; \exists i  : T_i\left(\xi^\mu \right) \neq \xi^\mu_i \right) \rightarrow 0
\end{align*}
as $N \rightarrow \infty$, i.e. with a probability converging to 1, all the patterns are fixed points of the dynamics. In this case we can even take
$\a < \frac{I(1)}{2}$.
\end{rem}

\begin{rem}
	\normalfont
	Theorem \ref{theo_exp} can be proven analogously if the configuration $\wt \xi^\mu$ is drawn uniformly at random from a Hamming ball $B(\xi^{\mu}, \rho N)$. Indeed, the probability of correcting an arbitrary pattern of the sphere $S(\xi^\mu, \rho N)$ can be used as a bound for the probability of correcting an arbitrary pattern of lower spheres. 
\end{rem}

Theorem \ref{multdyn} and Theorem \ref{theo_exp} will be proven in the following section.

\section{Proofs}
%
We start with the proof of Theorem \ref{theo nspin}.

\begin{proof}[Proof of Theorem \ref{theo nspin}]\leavevmode\newline
As already mentioned the first part of the Theorem has already been proven in \cite{Newman_hopfield}.

We are interested in bounding the following probability
\begin{align*}
\P(\exists i\le N: \; \widehat{T}_i(\xi^1) \ne \xi^1_i)
& = \P\Bigl(\exists i\le N: - \sum\limits_{\mu = 2}^M \xi_i^1 \xi_i^\mu \; \bigl(\sum\limits_{j} \xi_{j}^1 \xi_{j}^\mu \bigr)^{n-1} > N^{n-1} \Bigr).
\end{align*}
With the exponential Chebyshev inequality and the independence of the random variables $(\xi_i^\mu)$ for different $\mu$ it follows that
\begin{align*}
\P(\exists i\le N: \; \widehat{T}_i(\xi^1) \ne \xi^1_i) 
\le N e^{-t N^{n-1}} \E\Bigl[\exp\bigl(-t \, \xi^1_i \xi^2_i \bigl(\sum\limits_{j} \xi^1_j \xi^2_j \bigr)^{n-1}\bigr) \Bigr]^{M-1}.
\end{align*}
For the moment generating function we condition on the values of $\xi^1_i \xi^2_i$ to get the upper bound
\begin{align*}
& \P(\exists i\le N: \; \widehat{T}_i(\xi^1) \ne \xi^1_i) \le N e^{-t N^{n-1}} \E\big[\cosh(t (\sum\limits_{j}  \xi^1_j \xi^2_j)^{n-1} ) \big]^{M-1}.
\end{align*}
The random variables $(\xi^1_j \xi^2_j)_{j}$ are i.i.d. and distributed like $\xi^1_1$. Define $m = \frac{1}{\sqrt{N}} \sum_{j} \xi^1_j \xi^2_j$ and write the expectation as the sum over all possible values $x$ of $m$:
\begin{align*}
\E\big[\cosh(t (\sum\limits_{j}  \xi^1_j \xi^2_j)^{n-1} ) \big]
& = \sum\limits_{x} \cosh\Big(t N^{\frac{n-1}{2}} x^{n-1} \Big) \cdot  \P( m = x).
\end{align*}
The sum is over all $x\in\{0, \pm \frac{1}{\sqrt{N}}, \ldots, \pm \sqrt{N}\}$. First we want to eliminate the tail events and use the fact that the probability vanishes fast enough if we restrict $x$ away from zero. To this end we fix $\beta > \frac{1}{2}$ and split the sum at $\log(N)^\beta$. Additionally observe that $x$ cannot grow faster than $\sqrt{N}$ and set $t = a_n/M$ for $a_n > 0$ independent of $N$. Thus
\begin{align*}
& \sum\limits_{x: \log(N)^\beta < |x| \le \sqrt{N}} \cosh\left( t N^{\frac{n-1}{2}} x^{n-1} \right) \cdot \P(m = x)\\
 \le & 2 \cosh\left( t N^{n-1} \right) \P(m > \log(N)^\beta)
\le  2 \; \exp\left( t N^{n-1} \right) \exp\left(-\frac{1}{2} \log(N)^{2\beta} \right)\\
 = & 2 \; \exp\left( \left[ a_n c_n -\frac{1}{2} \log(N)^{2\beta - 1} \right] \log(N) \right).
\end{align*}
 Here we used a standard large deviations estimate and $\cosh(z) \le \exp(|z|)$.  This part of the moment generating function converges to zero for $N \rightarrow \infty$ because for $N$ large enough the term in brackets can be bounded by a negative value.

For the critical values of $m$ we use the inequality $\cosh(z) \le \exp(\frac{z^2}{2})$ for all $z$ and write the exponential function in its Taylor expansion:
\begin{align*}
& \sum\limits_{x: |x| \le \log(N)^\beta} \cosh\left( t N^{\frac{n-1}{2}} x^{n-1} \right)\P(m = x)
\le  \sum\limits_{x:|x| \le \log(N)^\beta}  e^{\frac{t^2}{2} N^{n-1} x^{2(n-1)}}\P(m = x) \\
& = \sum\limits_{x: |x| \le \log(N)^\beta} \left(1 + \frac{t^2}{2} N^{n-1} x^{2(n-1)} +  \sum\limits_{k=2}^\infty \frac{1}{2^k} \frac{(t^2 N^{n-1} x^{2(n-1)})^k}{k!}  \right) \P(m = x).
\end{align*}
The distribution of $m$ converges to a standard normal distribution and its moments are bounded by the moments of the latter. For $l\in \N$ let $\kappa_{2l}$ be the $2l$-th moment of a standard normal distribution. The sum of probabilities can be bounded by one. So for $N$ large enough, $t= a_n/M$, and $M = \mathrm{const.} \frac{N^{n-1}}{\log N}$ we derive the following upper bound for the moment generating function:
\begin{align*}
& \sum\limits_{x: |x| \le \log(N)^\beta} \cosh\left( t N^{\frac{n-1}{2}} x^{n-1} \right)\P(m = x)   \\
&\le   1+ \frac{t^2}{2} N^{n-1} \kappa_{2(n-1)} + \sum\limits_{x:\; |x| \le \log(N)^\beta} \sum\limits_{k=2}^\infty \frac{1}{2^k} \frac{\left(t^2 N^{n-1} x^{2(n-1)}\right)^k}{k!} \cdot \P(m = x) \\
& \le  1+ \frac{t^2}{2} N^{n-1} \kappa_{2(n-1)}+ \sum\limits_{k=2}^\infty \frac{1}{2^k} \frac{(t^2 N^{n-1} \log(N)^{2 \beta (n-1)})^k}{k!} \\
& \le 1+ \frac{t^2}{2} N^{n-1} \kappa_{2(n-1)}+ \frac{t^4}4 N^{2n-2} \log(N)^{4 \beta (n-1)}(e-2)\\
& \le \exp\left(\frac{t^2}{2} N^{n-1} \kappa_{2(n-1)} + t^4 N^{2(n-1)} \log(N)^{4\beta (n-1)} \right)
\end{align*}
Inserting $t= a_n/M$ into this result we obtain
\begin{align*}
\P&(\exists i\le N: \; \widehat{T}_i(\xi^1) \ne \xi^1_i) \\
& \le \exp\left(\log(N)-t N^{n-1}\right) \exp\left(\frac{t^2}{2} N^{n-1} \kappa_{2(n-1)} M + t^4 N^{2(n-1)} \log(N)^{4\beta (n-1)} M \right) \\
& = \exp \left( \left[1 - a_n c_n \left(1 - \frac{a_n \, \kappa_{2(n-1)}}{2} \right) \right] \log(N)  \right) \cdot (1+o(1)).
\end{align*}
for our choice of $t$ and $M$.

The moments of the standard normal distribution is given by $\kappa_{2(n-1)} = (2n-3)!!$ for all $n\in \N$. Choose $a_n = (\kappa_{2(n-1)})^{-1}$. The term in brackets can be bounded by negative value if $c_n$ satisfies
$$
1- a_n c_n \left( 1 - \frac{ a_n \, \kappa_{2(n-1)}}{2} \right) < 0
$$
which is the case if and only if
$c_n>2 (2n-3)!!$.
This is exactly the memory capacity proposed by Hopfield and Krotov in Theorem \ref{theoKH}. \\
\end{proof}

We continue with the proof of our central result.
\begin{proof}[Proof of Theorem \ref{theo_exp}]\leavevmode\newline
We start by slightly reformulating the dynamics of the model. Indeed,
an (almost) equivalent formulation is  to say that a neuron $i$ will remain unchanged after an application of the update rule if
\begin{align}
\label{updaterule}
\Delta_i E(\sigma) :=
\sum\limits_{\mu=1}^M
\left( F\left( \s_i \xi^\mu_i + \sum\limits_{j\neq i} \xi^\mu_j \sigma_j\right)
- F\left( -\s_i \xi^\mu_i + \sum\limits_{j \neq i} \xi^\mu_j \sigma_j\right) \right) > 0
\end{align}
and the spin of neuron $i$ will be changed if $\Delta_i E(\s)$ is less than zero. In the limit $N\rightarrow \infty$ the case $\Delta_i E(\s) = 0$  is negligible for the later purposes.

Starting in one of the patterns (without loss of generality the first one $\xi^1$) we want to show that it is an attractive fixed point of the update dynamics, i.e. we need to show that $T_i(\xi^\mu)=\xi_i^\mu$ and we want the model to correct up to $\r N$ random errors by updating each of the neurons once.  Without loss of generality we focus on the pattern $\xi^1$, taking a corrupted version $\tilde \xi^1$  uniformly at random from $S(\xi^1, \rho N)$ for a fixed $\rho \in [0, \frac 12)$, and the neuron $i$. Then we can interpret the summand for $\mu = 1$ in (\ref{updaterule}):
\begin{align*}
E_\text{signal} = F\left(\sum\limits_{j=1}^N \xi^1_j \wt \xi^1_j \right) - F\left(-2 \, \xi^1_i \wt \xi^1_i + \sum\limits_{j=1}^N \xi^1_j \wt \xi^1_j \right)
\end{align*}
as signal term and the rest of the sum in (\ref{updaterule}):
\begin{align*}
E_\text{noise} = \sum\limits_{\mu=2}^M
\left( F\left( \sum\limits_{j=1}^N \xi^\mu_j \wt \xi_j\right)
- F\left( -2\wt \xi_i \xi^\mu_i + \sum\limits_{j =1}^N \xi^\mu_j \wt \xi_j\right) \right)
\end{align*}
as noise term. As we will show, in order to have neuron $i$ not updated correctly, the noise term needs to have a bigger impact in (\ref{updaterule}) than the signal term. We want to show that the probability for this event vanishes for $N \rightarrow \infty$.

We need to distinguish two cases: First the neuron $i$ can be correct, i.e. $\xi_i^1= \wt \xi_i^1$, and we want the associative memory not to change this value (this means $\Delta_i E(\wt \xi^1) > 0$). In the other case the neuron is wrong, i.e. $\xi_i^1 = -\wt \xi_i^1$, and the network needs to correct this neuron (this means $\Delta_i E(\wt \xi^1)<0$). In both cases the signal term supports the desired behavior. Indeed, we have:
\begin{align*}
& F\left(\sum_{j=1}^N \xi^1_j \wt \xi^1_j \right) - F\left( \sum_{j=1}^N \xi^1_j \wt \xi^1_j  -2 \right) > 0 \text{ on the one hand, and }\\
& F\left(\sum_{j=1}^N \xi^1_j \wt \xi^1_j \right) - F\left( \sum_{j=1}^N \xi^1_j \wt \xi^1_j  +2 \right) < 0 \text{ on the other }
\end{align*}
depending on whether neuron $i$ is correct or incorrect (since $\sum_{j=1}^N \xi^1_j \wt \xi^1_j \geq (1-2\varrho)N> 2$ and $F=\exp$). This means that in order for neuron $i$ to update correctly, it must be that $\sgn\left(E_\text{signal}+E_\text{noise}\right)=\sgn\left(E_\text{signal}\right)$, which is fulfilled as soon as $ |E_\text{noise}| < |E_\text{signal}|$.
Thus, a necessary condition for the event $\{T_i(\wt \xi^1) \neq \xi^1_i\}$ is that
$  | E_\text{noise} | \geq | E_\text{signal} |.
$
Therefore:
\begin{align*}
& \P\left( \exists \mu \; \exists i  : T_i\bigl(\wt \xi^\mu \bigr) \neq \xi^\mu_i \right)
\leq  N \cdot M \cdot \P\bigl(T_i(\wt \xi^1) \neq \xi^1_i \bigr) \\
 \leq &   N \cdot M \cdot \P\bigl(|E_\text{noise}| \ge |E_\text{signal}|\bigr).
\end{align*}
By using the straightforward fact that $|e^{a\pm 1}-e^{a\mp 1}| \leq [1-e^{-2}] e^2 \cdot e^{a}$:
\begin{align*}
|E_\text{noise}|& \leq \sum\limits_{\mu=2}^M \left|\exp(\xi^\mu_i \xi^1_i + \sum\limits_{j\neq i} \xi^\mu_j \wt \xi^1_j)- \exp(-\xi^\mu_i \xi^1_i + \sum\limits_{j\neq i} \xi^\mu_j \wt \xi^1_j)\right|\\
&\leq [1-e^{-2}]e^2\sum\limits_{\mu=2}^M \exp\left(\langle \xi^\mu | \wt \xi^1 \rangle\right)
\end{align*}
where $\langle x | y \rangle$ is the inner product on $\{\pm 1\}^N$. At the same time $$| E_\text{signal} | > e^{N(1-2\r)}[1-e^{-2}].$$ It follows that:
\begin{align}\label{central}
\P\left( \exists \mu \; \exists i  : T_i\bigl(\wt \xi^\mu \bigr) \neq \xi^\mu_i \right)
&\leq N \cdot M \cdot \P\bigl(T_i(\wt \xi^1) \neq \xi^1_i \bigr) \nonumber \\
&\leq N \cdot M \cdot \P\left( \sum\limits_{\mu=2}^M e^2\exp(\langle \xi^\mu | \wt \xi^1 \rangle) >  e^{N(1-2\r)} \right).
\end{align}
We will use two standard estimates from the theory of large deviations \cite{dembozeitouni}: For a Binomially distributed random variable $\bar{S}_{m,p}$ with parameters $m$ and $p$
and for $\epsilon>0$, we have:
\begin{align}\label{corollary1}
\P\left( \bar{S}_{m,p} \geq m(p+\epsilon) \right) \leq \exp \left( -m \frac{\epsilon^2}{2(p+\epsilon)} \right)
\end{align}
and for a sum $S_m$ of $m$ i.i.d. random variables $X_i$ with $\P(X_1=1)=\P(X_1=-1)=\frac{1}{2}$ and $x\in(0,1)$ we have
\begin{align}\label{corollary2}
\P\left( S_m \geq m x\right) \leq \exp \left( - m I(x) \right)
\end{align}
as well as
\begin{align}\label{corollary3}
\lim\limits_{m\rightarrow\infty} \frac{1}{m} \log(\P\left(S_m > mx\right)) = - I(x)
\end{align}
where again $ I(x)=\frac{1}{2} \left( (1+x) \log (1+x) +(1-x) \log (1-x) \right)$. In fact, \eqref{corollary3} is nothing but Cram\'ers theorem for fair, $\pm 1$-Bernoullis. 

Now let $\a < \frac{1}{2}I(1-2\r)$, $M = \exp \left(\a N\right) + 1$ and $\beta_o$ be such that $I(\beta_o)=\a$. By continuity of $I$ there exists an $\epsilon>0$, such that for all $x\in (1-2\r-\epsilon,1-2\r]$ we have that $\a<\frac{1}{2}I(x)\leq\frac{1}{2}I(1-2\r)$. Again by continuity of $I$ we can choose $\beta<\beta_o$ such that:
$$\a-\frac{\epsilon}{2}=I(\beta_o)-\frac{\epsilon}{2}<I(\beta)< I(\beta_o)=\a.
$$
Let us define
\begin{eqnarray*}
A&=& \{ \mu \in \{2 \ldots M \} | \langle \xi^\mu | \wt \xi^1\rangle \geq \beta N \}, \\
p&=&\P \left( \langle \xi^2 |\wt \xi^1\rangle  \geq \beta N\right).
\end{eqnarray*}
By \eqref{corollary2}, we have that: $p < \exp(-N I(\beta) )$. On the other hand, by \eqref{corollary3} we can conclude that for $\eta = \frac{1}{2} (\alpha-I(\beta)) > 0$ and $N$ sufficiently large we have $p> \exp(-N(I(\beta)+\eta))$.

Now we compute the probability in \eqref{central} by picking those patterns $\xi^\mu$ with a significant overlap with $\xi^1$ (\textit{i.e.} $\mu \in A$).
\begin{align*}
 &\P\left( \sum\limits_{\mu=2}^M e^2\exp(\langle \xi^\mu | \wt \xi^1 \rangle) >  e^{N(1-2\r)} \right)\\
=& \sum_{X\subset \{2 \ldots M \}}\P\left( \left(\sum\limits_{\mu=2}^M \exp(\langle \xi^\mu | \wt \xi^1 \rangle) > e^{N(1-2\r )-2}\right) \cap (A=X)\right)\\
\leq& \sum_{k=0}^{M-1}\sum_{X\in P_k (\{2 \ldots M \})}p^k (1-p)^{M-1-k}\cdot\\
&\P\left( \sum\limits_{\mu \in X}\exp(\langle \xi^\mu | \wt \xi^1 \rangle) + (M-1-k) e^{\beta N} > e^{N(1-2\r )-2}  \middle| A=X\right)
\end{align*}
where $P_k$ denotes the subsets of size $k$. Additionally we used that every overlap in $A^c$ can be bounded by $\exp(\beta N)$. By the identical distribution of the patterns this is equal to
\begin{align*}
=& \sum_{k=0}^{M-1}\binom{M-1}{k}p^k (1-p)^{M-1-k}\cdot \\
&\P\left( \sum\limits_{\mu =2}^{k+1} \exp(\langle \xi^\mu | \wt \xi^1 \rangle) > e^{N(1-2\r) -2} -(M-1-k) e^{\beta N} \middle| A=\{2, \ldots, k+1\}\right).
\end{align*}
By using the maximal summand as an upper bound, a standard union bound and the identical distribution for all $\mu$ we arrive at the following term
\begin{align*}
\leq& \sum_{k=0}^{M-1}\binom{M-1}{k}p^k (1-p)^{M-1-k}\cdot\\
&\P\left( \max\limits_{\mu \in \{2,\ldots, k+1\}} \exp(\langle \xi^\mu | \wt \xi^1 \rangle) > \frac{1}{k}( e^{N(1-2\r)-2} -(M-1-k) e^{\beta N})\middle| A=\{2, \ldots, k+1\}\right) \\
\leq& \sum_{k=0}^{M-1}k\binom{M-1}{k}p^k (1-p)^{M-1-k}\cdot \\
&\P\left( \exp(\langle \xi^2 | \wt \xi^1 \rangle) > \frac{1}{k}( e^{N(1-2\r)-2} -(M-1-k) e^{\beta N})\middle| A=\{2, \ldots, k+1\}\right).
\end{align*}
Denote by
$$r = r(k)= \P \left( \exp\left(\langle \xi^2 | \wt \xi^1 \rangle\right) \geq \frac{1}{k}( e^{N(1-2\r)-2} -(M-1-k) e^{\beta N})\right).$$
We then arrive at
\begin{align*}
& \P\left( \left(\exp(\langle \xi^2 | \wt \xi^1 \rangle) > \frac{1}{k}( e^{N(1-2\r)-2} -(M-1-k) e^{\beta N})\right) \middle| A=\{2, \ldots, k+1\}\right) \\
& = \frac{\P\left(\left(\exp(\langle \xi^2 | \wt \xi^1 \rangle) > \frac{1}{k}( e^{N(1-2\r)-2} -(M-1-k) e^{\beta N})\right); 2 \in A \right)}{\P\left( 2 \in A \right)} 
\le \frac{r(k)}{p}
\end{align*}
because $\P\left(2 \in A\right) = p$.

Thus an upper bound is:
\begin{align*}
&\P\Bigl( \sum\limits_{\mu=2}^M \exp(-\xi_i^\mu \xi^1_i + \sum\limits_{j\neq i} \xi_j^\mu \wt \xi^1_j)- \exp(\xi_i^\mu \xi^1_i + \sum\limits_{j\neq i} \xi_j^\mu \wt \xi^1_j)> c e^{N(1-2\r)}\Bigr)\\
\le&
 \sum_{k=0}^{M-1}k\binom{M-1}{k} r(k) p^{k-1}(1-p)^{M-1-k}.
\end{align*}
Now let us split this sum into two parts: the first one for $k \in \{0, \ldots , \lfloor 2p(M-1) \rfloor\}$ and the second one the remaining part.
We start with the second part. By using the identity $k\binom{M-1}{k}=(M-1)\binom{M-2}{k-1}$ and the trivial fact that $r(k)\leq 1$, we get:
\begin{align*}
 &\sum_{k=\lfloor 2p(M-1)\rfloor +1}^{M-1} k\binom{M-1}{k} r(k) p^{k-1}(1-p)^{M-1-k}\\
\leq& (M-1)\sum_{k=\lfloor 2p(M-1)\rfloor +1}^{M-1}\binom{M-2}{k-1}p^{k-1} (1-p)^{M-2-(k-1)}\\
=& (M-1) \P\left(S_{M-2}\geq \lfloor 2p(M-1)\rfloor  \right)
\leq  (M-1) \P\left(S_{M-2}\geq \frac{3}{2}p(M-2)\right).
\end{align*}
We used the bound $\lfloor 2p(M-1)\rfloor > \frac{3}{2}p(M-2)$, which is a consequence of the fact that $p(M-1)$ goes to infinity. This will be shown at the end of the proof.
Then, by using \eqref{corollary1}, with $\epsilon=\frac{p}{2}$ we obtain:
$$\sum_{k=\lfloor 2p(M-1)\rfloor +1}^{M-1} k\binom{M-1}{k} r(k) p^{k-1}(1-p)^{M-1-k} \leq (M-1) \exp\left( -\frac{p(M-2)}{12}\right).
$$
We consider now the first part. Since:
$$\frac{1}{k}( e^{N(1-2\r)-2} -(M-1-k) e^{\beta N})= \frac{1}{k}( e^{N(1-2\r)-2} - e^{ (\alpha +\beta) N} ) + e^{\beta N}
$$
we clearly see that $r(k)$ is increasing in $k$ if $\alpha+\beta<1-2\varrho$ is fulfilled. This condition will be proven later on. 
Thus:
\begin{align*}
\sum_{k=0}^{\lfloor 2p(M-1)\rfloor }& \binom{M-1}{k} kr(k) p^{k-1}(1-p)^{M-1-k}\\
&\leq \frac{1}{p}\max_{k\in \{0,\ldots, \lfloor 2p(M-1)\rfloor \}} kr(k)\\
& \leq2(M-1) \cdot r( 2p(M-1)).
\end{align*}
Let us examine this last term: since $p< e^{-N I(\beta)}$, we observe
\begin{align*}
r( 2p(M-1)) =& \P\Bigl( \exp(\langle \xi^2 | \wt \xi^1 \rangle) > \frac{1}{2p}\bigl( e^{N(1-2\r-\alpha)-2} - e^{\beta N}\bigr)+e^{\beta N}\Bigr)\\
\leq & \P\Bigl( \exp(\langle \xi^2 | \wt \xi^1 \rangle) > \frac{1}{2}\bigl( e^{N(1-2\r-\alpha+I(\beta))-2} - e^{(\beta+I(\beta)) N}\bigr) \Bigr).
\end{align*}
First of all let us show that $1-2\r-\alpha+I(\beta)>\beta+I(\beta)$, so that the first term dominates the second term:
Indeed, this is equivalent to proving that $\a + \beta < 1-2\r$. By concavity we have for $x\in(0,1)$:
$$ I(x)\leq \log \left( \frac{(1+x)^2}{2} +\frac{(1-x)^2}{2}\right)= \log ( 1+x^2) \leq x^2 \leq x.
$$
From this we obtain
$$\a +\beta <\a +\beta_o = \a +I(\a) \leq 2\a \leq I(1-2\r) \leq 1-2\r.
$$
This proves that $1-2\r-\alpha+I(\beta)>\beta+I(\beta)$ and also concludes the statement that $r(k)$ is increasing (see above). 
Now take $\gamma$ such that $1-2\r-\epsilon<\gamma<1-2\r-\frac{\epsilon}{2}$ for an $\epsilon > 0$. Then
$$1-2\r-\epsilon<\gamma<1-2\r-\alpha+I(\beta).
$$
For $N$ sufficiently large we get:
$$r( 2p(M-1)) \leq \P\left( \exp(\langle \xi^2 | \wt \xi^1 \rangle) > e^{\gamma N}\right).
$$
By applying \eqref{corollary2} and for $N$ sufficiently large one sees that:
$$r( 2p(M-1)) \leq \P\left( \exp(\langle \xi^2 | \wt \xi^1 \rangle) > e^{\gamma N}\right) \leq \exp\left( -N I(\gamma)\right).
$$
Now if we bring everything together, we finally arrive at:
\begin{align*}
\P & \Bigl(\exists \mu \; \exists i  : T_i\bigl(\wt \xi^\mu \bigr) \neq \xi^\mu_i \Bigr) \\
&\leq N \cdot M \cdot \P\Bigl(\sum\limits_{\mu=2}^M \exp\bigl( -\xi^\mu_i \xi^1_i + \sum\limits_{j\neq i} \xi^\mu_j \wt \xi^1_j \bigr) - \exp\bigl( \xi^\mu_i \xi^1_i + \sum\limits_{j\neq i} \xi^\mu_j \wt \xi^1_j \bigr) > ce^{N(1-2\r)} \Bigr)\\
&\leq N\cdot M\Bigl( 2(M-1)\exp\bigl( -NI(\gamma)\bigr) + (M-1) \exp\bigl( -\frac{p(M-2)}{12}\bigr)\Bigr)\\
&\leq 2N\frac{M}{M-1}\exp\left(-N(I(\gamma)-2\a)\right) +N\frac{M}{M-1}\exp\Bigl( 2\a-\frac{M-2}{M-1}\cdot\frac{p(M-1)}{12}\Bigr).
\end{align*}
But by the definition of $\gamma$, we have $I(\gamma)>2\a$. So the first term clearly tends to $0$.
For the second term by using the lower bound on $p$, we have that:
$$p(M-1)\geq \exp( N(\a-I(\beta)-\eta)).
$$
But we know that $I(\beta)<I(\beta_o)=\a$ and $\eta = \frac{1}{2} (\alpha - I(\beta))$, so $\a-I(\beta)-\eta>0$. Therefore the second term converges also to $0$. Also a straightforward consequence of this bound is that $p(M-1)$ goes to infinity and thus establishes the still open fact that $\lfloor 2p(M-1)\rfloor > \frac{3}{2}p(M-2)$, used above.
So clearly if the condition $\alpha < \frac{I(1-2\r)}{2}$ is fulfilled, we obtain
$$\P  \left( \exists \mu \; \exists i  : T_i\left(\wt \xi^\mu \right) \neq \xi^\mu_i \right) \to 0
$$
This finishes the proof.

\end{proof}

\vspace{0.5cm}
{\bf Acknowledgement}:
The final publication is available at Springer via
\newline http://dx.doi.org/10.1007/s10955-017-1806-y

\bibliographystyle{abbrv}

\bibliography{LiteraturDatenbank}                

\end{document}